\numberwithin{equation}{section}
\theoremstyle{plain} \newtheorem{theorem}{Theorem}
\newtheorem{proposition}{Proposition}
\newtheorem{corollary}{Corollary}
\begin{document}

\title{On the geometric and analytical properties of the anharmonic oscillator}

\author[1]{Jaume Gin\'e}

\author[2,3,4]{Dmitry I. Sinelshchikov}

\affil[1]{Departament de Matem\`atica, Universitat de Lleida, Avda. Jaume II, 69; 25001 Lleida, Catalonia, Spain}
\affil[2]{Instituto Biofisika (UPV/EHU, CSIC), University of the Basque Country, Leioa E-48940, Spain}
\affil[3]{Ikerbasque Foundation, Bilbao 48013, Spain}
\affil[4]{HSE University, 34 Tallinskaya Street, 123458, Moscow, Russian Federation}

\maketitle

\begin{abstract}
Here we consider the anharmonic oscillator that is a dynamical system given by $y_{xx}+\delta y^{n}=0$. We demonstrate that to this equation corresponds a new example of a superintegrable two-dimensional metric with a linear and a transcendental first integrals. Moreover, we show that for particular values of $n$ the transcendental first integral degenerates into a polynomial one, which provides an example of a superintegrable metric with additional polynomial first integral of an arbitrary even degree. We also discuss a general procedure of how to construct a superintegrable metric with one linear first integral from an autonomous nonlinear oscillator that is cubic with respect to the first derivative. We classify all cubic oscillators that can be used in this construction. Furthermore, we study the Li\'enard equations that are equivalent to the anharmonic oscillator with respect to the point transformations. We show that there are nontrivial examples of the Li\'enard equations that belong to this equivalence class, like the generalized Duffing oscillator or the generalized Duffing--Van der Pol oscillator.
\end{abstract}

\section{Introduction}
Integrability of dynamical systems is a complex notion that has different definitions in different contexts (see, e.g. \cite{Goriely,Llibre_book,Zhang}). However, it can be suggested that there are two unifying properties of integrable systems. The first one is that for an integrable system one can completely describe global and local behaviour of its trajectories, and the second one is that a generic dynamical system is not globally integrable. As a consequence, construction and classification of integrable systems is a non-trivial problem. One of the possible approaches to this problem is based on establishing integrability of a dynamical system by connecting it to another, perhaps simpler, integrable system. Here we show that superintegrability of a two-dimensional Riemannian metric and integrability of a family of Li\'enard equations, which includes the Duffing and the generalized Duffing--Van der Pol oscillators, can be demonstrated by connecting them to what is probably the simplest nonlinear oscillator, namely the anharmonic oscillator. We also extend this idea and show how autonomous, cubic with respect to the first derivative oscillators, can be used in the construction of superintegrable two-dimensional Riemannian metrics.

In this work we consider the anharmonic oscillator
\begin{equation}\label{eq:anho_t}
  y_{xx}+\delta y^{n}=0.
\end{equation}
Here $y=y(x)$ and in \eqref{eq:anho_t} and in what follows we denote derivatives by lower indices, i.e. $y_{x}=dy/dx$, $y_{xx}=d^{2}y/dx^{2}$ and so on. Throughout this work we also assume that $n\neq0,1$ and $\delta\neq0$ are arbitrary parameters. We exclude $n=0$ and $n=1$ from the consideration because at these values of $n$ system \eqref{eq:anho_t} becomes a linear equation. Since at $n=-1$ both autonomous and non-autonomous first integrals of \eqref{eq:anho_t} have a special form, this case will be considered separately in \ref{sec:appendix2}. Therefore, without loss of generality, we change $\delta$ in \eqref{eq:anho_t} to $(n+1)\delta$ to obtain
\begin{equation}\label{eq:anho}
  y_{xx}+\delta (n+1)y^{n}=0.
\end{equation}

It is worth noting that for $n=-3$ equation \eqref{eq:anho_t} admits a three-dimensional Lie algebra, while for all $n$ but $n=-3$, $n=1$, $n=0$ the Lie algebra of symmetries of \eqref{eq:anho_t} is two-dimensional. If $n=0$ and $n=1$ equation \eqref{eq:anho_t} is linear and admits eight dimensional Lie algebra.

Let us also remark that the harmonic oscillator (i.e. \eqref{eq:anho} at $n=1$) can be used for the description of small-amplitude oscillations of e.g. springs that obey the Hooke's Law. This is an idealized situation that assumes that a system displaced from equilibrium responds with a restoring force whose magnitude is proportional to the displacement. However, it fails to describe different nonlinear phenomena that appear when finite amplitude motion is studied. For example, anharmonic oscillator \eqref{eq:anho} and its generalizations can be used for the description of oscillatory motion of a load supported by rubber like
springs (see, \cite{Beatty1988,Beatty1988a,Beatty2012} and references therein).

In a broader context, oscillatory behaviour is ubiquitous in a wide range of applications and very often the idealized linear approximation breaks down and fails to describe different nonlinear phenomena that appear for example in biology, chemistry or mechanics (see, \cite{Strogatz,Murray}). Therefore, understanding of the behaviour of trajectories of nonlinear oscillators, like the Li\'enard systems, is an important problem. One of possible solutions to this problem is establishing integrability, i.e. in the context autonomous oscillators, the existence of an autonomous first integral. This allows one to completely describe the behavior of trajectories of the corresponding oscillator in the phase plane.

In this work, we demonstrate that there is a non-trivial superintegrable two-dimensional Riemannian metric with a linear and a transcendental first integrals corresponding to \eqref{eq:anho}. Then, we generalize this construction for autonomous, cubic with respect to the first derivative, nonlinear oscillators. We also find all oscillators of this type that can be used for constructing certain superintegrable two-dimensional Riemannian metrics. In addition, we show that important for applications integrable Li\'enard equations belong to the equivalence class of \eqref{eq:anho} with respect to point transformations.

Recall the definitions of integrability and superintegrability for Hamiltonian systems (see, e.g.  \cite{Winternitz2002,Fasso2005,Winternitz2013}). A Hamiltonian system is integrable, if it possesses the same amount of functionally independent first integrals, which are in involution, as the systems' degrees of freedom. A Hamiltonian system is called superintegrable if it has more functionally independent first integrals than its degrees of freedom. Notice that the maximal amount of functionally independent first integrals of a Hamiltonian system is equal to $2m-1$, where $m$ is the number its degrees of freedom.  For example, a two-dimensional Hamiltonian system is called superintegrable if it possesses three functionally independent first integrals. In its turn, a Riemannian metric is called integrable or superintegrable if the corresponding Hamiltonian system for its geodesic flow is integrable or superintegrable, respectively \cite{Matveev2008,Matveev2011,Kruglikov2008,Matveev2019,Bolsinov2018}.

Construction and classification of superintegrable metrics are interesting problems due to their importance in mathematical physics and differential geometry \cite{Matveev2011,Winternitz2013}. In the former, superintegrable systems are of great interest due to a possibility of an analytical description of their solutions, which is quite rare for physically relevant systems of differential equations. In the latter, there are several open problems that are connected with integrable geodesic flows \cite{Matveev2019,Bolsinov2018}. For example, in \cite{Matveev2019} (see problem 10.4) it was pointed out that it is an interesting problem to construct examples of metrics that are not of constant curvature and are not Darboux-superintegrable, where all geodesics are explicitly known. Below, we demonstrate that the metric connected to \eqref{eq:anho} provides such an example.

Furthermore, typically, superintegrable systems with polynomial in momenta integrals are considered \cite{Winternitz2002,Matveev2011,Winternitz2013,Valent2015,Valent2017}. There also has been some interest in integrable two-dimensional Hamiltonian systems with rational integrals \cite{Kozlov2014,Bagderina2017,Agapov2020,Agapov2021}. However, to the best of our knowledge, superintegrable systems with transcendental first integrals have not been constructed and, hence, it is interesting to obtain such examples. On the other hand, it is known \cite{Matveev2016,Bolsinov2018} that a generic integrable metric does not have non-trivial (i.e. functionally independent of the Hamiltonian) polynomial first integrals. Therefore, polynomial integrability or superintegrability imposes non-trivial constraints on a Riemannian metric.

Here we show that the metric corresponding to \eqref{eq:anho} is superintegrable with a linear and a transcendental first integrals. Moreover, at certain values of $n$ the transcendental first integral degenerates into a polynomial one of an arbitrary even degree. As far as we know, this is a first example of a superintegrable metric with additional transcendental first integral. Moreover, the metric connected to \eqref{eq:anho} provides an interesting example of a geodesic flow with a non-trivial polynomial first integral of an arbitrary even degree.


To show superintegrability of a metric connected to \eqref{eq:anho} we use two first integrals of \eqref{eq:anho}. The first one is the obvious autonomous integral
\begin{equation}\label{eq:anho_I_1}
  I_{1}=y_{x}^{2}+2\delta y^{n+1}.
\end{equation}
The second one is non-autonomous first integral that is
\begin{equation}\label{eq:anho_I_2}
  I_{2}=x-\frac{y y_{x}}{y_{x}^{2}+2\delta y^{n+1}}{}_{2}F_{1}\left(\frac{n+3}{2n+2},1;\frac{n+2}{n+1}; \frac{2\delta y^{n+1}}{y_{x}^{2}+2\delta y^{n+1}}\right),
\end{equation}
where ${}_{2}F_{1}(a,b;c;\zeta)$ is the hypergeometric function. The derivation of \eqref{eq:anho_I_2} is presented in \ref{sec:appendix}, where we use the Euler integral representation of the hypergeometic function (see formula \eqref{eq:eq_a5a}) as its definition. The standard definition of the hypergeometric function through power series can be found in \cite{Olver,Abramowitz,Bateman}. It is also demonstrated in \ref{sec:appendix} that \eqref{eq:anho_I_2} is a Liouvillian function. We recall roughly speaking that Liouvillian functions comprise a set of functions including the elementary functions and their repeated primitives.
It is easy to see that \eqref{eq:anho_I_2} degenerates into a rational expression with respect to $y_{x}$ (see, e.g. \cite{Abramowitz,Olver}) if $n=-(2k+3)/(2k+1)$ or $(n+3)/(2n+2)=-k$, $k\in\mathbb{N}^{0}=\mathbb{N}\cup\{0\}$, where $\mathbb{N}=\{1,2,3,\ldots\}$ is the set of natural numbers.  With the help of \eqref{eq:anho_I_1} this rational first integral can be converted into a polynomial one with the degree $2k+2$. Indeed, at $n=-(2k+3)/(2k+1)$ we have
\begin{equation}\label{eq:eq9c}
I_{2}^{(k)}=x\left(y_{x}^{2}+2\delta y^{-\frac{2}{2k+1}}\right)^{k+1}-y y_{x}\sum\limits_{s=0}^{k}(-1)^{s}\binom{k}{s}\frac{(2\delta)^{s}s!}{\left(\frac{1}{2}-k\right)_{s}} y^{-\frac{2s}{2k+1}}\left(y_{x}^{2}+2\delta y^{-\frac{2}{2k+1}}\right)^{k-s},
\end{equation}
where $(a)_{s}=a(a+1)\ldots(a+s-1)$ is the Pochhammer symbol and $\binom{k}{s}$ denotes the binomial coefficients.

Below, we also use these first integrals to construct integrable families of the Li\'enard equations. The integrability in this context is an intrinsic property of a given autonomous two-dimensional dynamical system, in particular the Li\'enard system, to have a global autonomous first integral. Integrability problem consists in determining the existence and the functional class of an autonomous first integral for a given dynamical system. For example, a two-dimensional dynamical system is Liouvillian integrable when its autonomous first integral can be expressed in terms of the Liouvillian functions (see, \cite{S,Gine2012}).

Finding integrable Li\'enard equations is an important problem from both mathematical and applied points of view (see, e.g. \cite{Gine2010,Demina2018,Gine2020,Sinelshchikov2020,Gine2011,Gine2022,Demina2022,Sinelshchikov2022,Llibre2022,Sinelshchikov2023}). For example, for an integrable Li\'enard system one can determine existence or non-existence of isolated periodic trajectories (limit cycles) or families of periodic trajectories (periodic trajectories surrounding neutrally stable fixed points). This is of interest due to the connection with still unsolved problems of the existence of limit cycles and nonlinear centers in a given two-dimensional autonomous dynamical system \cite{Gine2010,Demina2018,Gine2020,Sinelshchikov2020,Gine2011,Gine2022,Demina2022,Zhang}. As far as an applied point of view is concerned, for an integrable Li\'enard system using a global first integral one can analytically describe all possible trajectories, i.e. completely understand dynamics that is governed by this system \cite{Sinelshchikov2020,Llibre2022,Sinelshchikov2022,Sinelshchikov2023}. In addition, this allows one to analytically study various bifurcations and special trajectories such as homoclinic and heteroclinic loops in a given integrable system. Here we show that several important integrable Li\'enard equations belong to the equivalence class of \eqref{eq:anho} with respect to the point transformations. For example, known integrable cases of the generalized Duffing oscillator, that has application in physics and biology, belong to it. Furthermore, we find that there are other physically and mechanically relevant oscillators among the equivalence class of \eqref{eq:anho}. In addition, in Section 3 we demonstrate that all autonomous cubic oscillators that are connected to superintegrable two-dimensional Riemannian metrics with one linear first integral are also integrable in the above sense, i.e. they have an explicit global first integral, which follows from their linearizability via nonlocal transformations. There are physically, mechanically and biologically relevant examples among them. This demonstrates the importance of our results in a broader context of nonlinear science and its applications.

The rest of this work is organized as follows. In the next section we deal with the metric that corresponds to \eqref{eq:anho}. Section 3 is devoted to the generalization of the results of Section 2 to an autonomous cubic nonlinear oscillator.  In Section 4 we construct Li\'enard equations that are equivalent to \eqref{eq:anho} and consider several important examples of such Li\'enard equations. In the last Section we briefly summarize and discuss our results.

\section{Superintegrable metric with a transcendental first integral}
We begin with some basic definitions that will be necessary for the construction of the metric corresponding to \eqref{eq:anho} (see, e.g. \cite{DNF,Carmo,Matveev2008,Dunajski2009}).

Geodesics for a smooth two-dimensional surface with the metric
\begin{equation}\label{eq:metric}
  ds^{2}=g_{11}(x,y)dx^{2}+2g_{12}(x,y)dxdy+g_{22}(x,y)dy^{2},
\end{equation}
are defined by
\begin{equation}\label{eq:geodesics_eq}
\begin{gathered}
  \ddot{x}^{i}+\Gamma^{i}_{jk}\dot{x}^{j}\dot{x}^{k}=0, \quad (x^{1},x^{2})=(x,y),
  \end{gathered}
\end{equation}
where
\begin{equation}\label{eq:Levi-Civita}
\begin{gathered}
\Gamma^{i}_{jk}=\frac{g^{il}}{2}\left(\frac{\partial g_{jl}}{\partial x^{k}}+\frac{\partial g_{kl}}{\partial x^{j}}-\frac{\partial g_{jk}}{\partial x^{l}}\right), \quad
  g^{ik}g_{kj}=\delta^{i}_{j}, \quad i,j,k=1,2.
  \end{gathered}
\end{equation}

Conversely, geodesics satisfy the following two-dimensional Hamiltonian system
\begin{equation}\label{eq:geodesics_H}
  \dot{x}^{i}=H_{p_{i}}, \quad \dot{p}^{i}=-H_{x_{i}}, \quad H=\frac{1}{2}\left(g^{11}p_{1}^{2}+2g^{12}p_{1}p_{2}+g^{22}p_{2}^{2}\right).
\end{equation}

Projection of \eqref{eq:geodesics_eq} on the $(x,y)$ plane is
\begin{equation}\label{eq:cubic_eq}
  y_{xx}+a_{3}(x,y)y_{x}^{3}+a_{2}(x,y)y_{x}^{2}+a_{1}(x,y)y_{x}+a_{0}(x,y)=0,
\end{equation}
with
\begin{equation}\label{eq:projection}
  a_{3}=-\Gamma_{22}^{1}, \quad a_{2}=\Gamma_{22}^{2}-2\Gamma_{12}^{1}, \quad a_{1}=2\Gamma_{12}^{2}-\Gamma_{11}^{1}, \quad a_{0}=\Gamma_{11}^{2}.
\end{equation}
Consequently, for every metric there exists an equation from \eqref{eq:cubic_eq} that is a projection of its geodesic flow. However, the converse of this statement is not true, i.e. not every equation from \eqref{eq:cubic_eq} corresponds to some metric \eqref{eq:metric}. Equations from \eqref{eq:cubic_eq} that correspond to a metric are called metrisable and necessary and sufficient conditions for metrisability of \eqref{eq:cubic_eq} were obtained in \cite{Dunajski2009}.

Using the Levi--Civita relation \eqref{eq:Levi-Civita}, we explicitly express correlations \eqref{eq:projection} in terms of the components of the metric tensor
\begin{align}\label{eq:pre_Liouville}
  a_{0} &=\frac{1}{2}\frac{2g_{11}g_{12,x}-g_{11}g_{11,y}-g_{12}g_{11,x}}{g_{11}g_{22}-g_{12}^{2}}, \nonumber \\
  a_{1} &=\frac{1}{2}\frac{2g_{11}g_{22,x}-3g_{12}g_{11,y}-g_{22}}{g_{11}g_{22}-g_{12}^{2}},\\
  a_{2 } &=\frac{1}{2}\frac{g_{11}g_{22,y}+3g_{12}g_{22,x}-2g_{12}g_{12,y}-2g_{22}g_{11,y}}{g_{11}g_{22}-g_{12}^{2}},  \nonumber \\
  a_{3} &=\frac{1}{2}\frac{g_{12}g_{22,y}+g_{22}g_{22,x}-2g_{22}g_{12,y}}{g_{11}g_{22}-g_{12}^{2}}. \nonumber
\end{align}
Thus, for any metric \eqref{eq:metric} (recall that $g_{ij}$ is positive-definite) we necessary obtain a projective equation in the from \eqref{eq:cubic_eq} with coefficients given by \eqref{eq:pre_Liouville}. Now suppose that we have four functions $a_{l}$, $l=\overline{0,3}$ and we need to find whether there are three functions $g_{11}$, $g_{12}$ and $g_{22}$ satisfying \eqref{eq:pre_Liouville}. Consequently, we can consider \eqref{eq:pre_Liouville} as an overdetermined system of equations for $g_{11}$, $g_{12}$, $g_{22}$ and if it is compatible, then the corresponding equation from \eqref{eq:cubic_eq} is metrisable.

With the help of the transformations proposed by R. Liouville \cite{Liouville,Matveev2008,Dunajski2009}
\begin{equation}\label{eq:Liouville_1}
  \psi_{1}=\Delta^{2} g_{11}, \quad \psi_{2}=\Delta^{2}g_{12}, \quad \psi_{3}=\Delta^{2}g_{22}, \quad \Delta=\psi_{1}\psi_{3}-\psi_{2}^{2}\neq0,
\end{equation}
system \eqref{eq:pre_Liouville} can be transformed into
\begin{equation}\label{eq:Liouville}
\begin{gathered}
  \psi_{1,x}=-\frac{2}{3}a_{1}\psi_{1}+2a_{0}\psi_{2},\\
  \psi_{3,y}=-2a_{3}\psi_{2}+\frac{2}{3}a_{2}\psi_{3},\\
  \psi_{1,y}+2\psi_{2,x}=-\frac{4}{3}a_{2}\psi_{1}+\frac{2}{3}a_{1}\psi_{2}+2a_{0}\psi_{3},\\
  \psi_{3,x}+2\psi_{2,y}=-2a_{3}\psi_{1}+\frac{4}{3}a_{1}\psi_{3}-\frac{2}{3}a_{2}\psi_{2}.
  \end{gathered}
\end{equation}
This is a linear (contrary to \eqref{eq:pre_Liouville} that is nonlinear) overdetermined system of four equations for three functions $\psi_{l}$, $l=1,2,3$ that was obtained by R. Liouville \cite{Liouville,Matveev2008,Dunajski2009}. An equation from \eqref{eq:cubic_eq} is metrisable if and only if there is a solution of \eqref{eq:Liouville} such that $\Delta$ does not vanish \cite{Liouville,Matveev2008,Dunajski2009}. The  modern variant of the proof of this statement is given in \cite{Matveev2008}. Notice also that geometric motivation for the change of variables \eqref{eq:Liouville_1} is provided in \cite{Matveev2008}. System \eqref{eq:Liouville} is called the Liouville system \cite{Matveev2008,Dunajski2009,Dunajski2018}. The compatibility conditions for \eqref{eq:Liouville} were constructed in \cite{Dunajski2009}. On the other hand, for a given equation from \eqref{eq:cubic_eq}, one can directly check whether there is a solution of the Liouville system that satisfies the condition $\Delta\neq0$.

It can be directly shown by obtaining integrability conditions for the Liouville system \eqref{eq:Liouville} that equation \eqref{eq:anho} is metrisable with the metric
\begin{equation}\label{eq:metric_aho}
  ds^{2}=\frac{1}{C_{1}^{2}(2\delta C_{1} y^{n+1}+C_{2})^{2}}\left[(2\delta C_{1} y^{n+1}+C_{2})dx^{2}+C_{1}dy^{2} \right],
\end{equation}
where $C_{1}\neq0$ and $C_{2}$ are arbitrary constants.

The corresponding Hamiltonian for the geodesics of \eqref{eq:metric_aho} is
\begin{equation}\label{eq:H_aho}
  H=(2C_{1}\delta y^{n+1}+C_{2})\left(C_{1}\frac{p_{1}^{2}}{2}+(2C_{1}\delta y^{n+1}+C_{2})\frac{p_{2}^{2}}{2}\right).
\end{equation}
It is clear that \eqref{eq:H_aho} has a cyclic coordinate $x$ and, hence, the linear first integral
\begin{equation}\label{eq:L_aho}
  L=p_{1}.
\end{equation}
First integrals \eqref{eq:anho_I_1} and \eqref{eq:anho_I_2} can be lifted via $y_{x}=H_{p_{2}}/H_{p_{1}}$ to be first integrals of \eqref{eq:H_aho} as follows
\begin{equation}\label{eq:R_ahp}
  R=\frac{(2C_{1}\delta y^{n+1}+C_{2})^{2}}{C_{1}^{2}}\frac{p_{2}^{2}}{p_{1}^{2}}+2\delta y^{n+1},
\end{equation}

\begin{equation}\label{eq:T_ahp}
\begin{gathered}
  T=x-\frac{y v}{1+2\delta y^{n+1}v^{2}}\,{}_{2}F_{1}\left(\frac{n+3}{2n+2},1;\frac{n+2}{n+1},\frac{1}{1+2\delta y^{n+1}v^{2}}\right),\\
v=\left(1+\frac{C_{2}}{2C_{1}\delta y^{n+1}}\right)\frac{p_{2}}{p_{1}}.
  \end{gathered}
\end{equation}

It is easy to see that \eqref{eq:R_ahp} is a function of $H$ and $L$, however, the Jacobi matrix for $H$, $L$ and $T$ has the full rank.

One can also demonstrate that metric \eqref{eq:metric_aho} is not flat since its Gaussian curvature is
\begin{equation}\label{eq:curvature_aho}
  K=(n+1)C_{1}^{2}\delta(C_{2}ny^{n-1}+\delta C_{1}(n-1)y^{2n}).
\end{equation}

With the help of \eqref{eq:eq9c} one can see that at $n=-(2k+3)/(2k+1)$, $k\in\mathbb{N}^{0}$ Hamiltonian \eqref{eq:H_aho} admits a first integral
\begin{equation}\label{eq:R_new}
\begin{gathered}
T_{k}=\left(\tilde{v}^{2}+2\delta y^{-\frac{2}{2k+1}}p_{1}^{2}\right)^{k+1}x-y\tilde{v}\sum\limits_{s=0}^{k}(-1)^{s}\binom{k}{s}\frac{(2\delta)^{s}s!}{\left(\frac{1}{2}-k\right)_{s}} y^{-\frac{2s}{2k+1}}\left(\tilde{v}^{2}+2\delta y^{-\frac{2}{2k+1}p_{1}^{2}}\right)^{k-s}p_{1}^{2s+1},\\
\tilde{v}=\left(\frac{C_{2}}{C_{1}}+2\delta y^{-\frac{2}{2k+1}}\right)p_{2}.
\end{gathered}
\end{equation}
This integral is a polynomial function of momenta with the degree $2k+2$. Consequently, we obtain an example of a two-dimensional integrable metric with a polynomial integral of an arbitrary even degree.

The case of $k=0$ or $n=-3$ leads to a quadratic first integral, which means that metric \eqref{eq:metric_aho} at $n=-3$ is a Darboux superintegrable one \cite{Matveev2011}. One can also use the results of \cite{Matveev2008} to demonstrate that \eqref{eq:metric_aho} is Darboux superintegrable if and only if $n=-3$. Indeed, assume that $n\neq0,1$. Then the dimension of the Lie algebra of projective vector fields of \eqref{eq:metric_aho} is 3 if and only if $n=-3$. Finally, in \cite{Matveev2008} it was shown that all Darboux superintegrable metrics have three-dimensional Lie algebra of projective vector fields.

It is known (see, e.g. \cite{Matveev2008}) that for every solution of projective equation \eqref{eq:cubic_eq} $y(x)$, the curve $(x,y(x))$ is, up to reparametrization, a geodesic of the corresponding metric. Therefore, with the help of the general solution of \eqref{eq:anho} one can explicitly construct all geodesics of \eqref{eq:metric_aho}. One can find the general solution of \eqref{eq:anho} by eliminating $y_{x}$ from \eqref{eq:anho_I_1} and \eqref{eq:anho_I_2}. Notice also that it is convenient to express $x$ as a function of $y$. Indeed, suppose that $I_{1}=C_{3}$ and $I_{2}=C_{4}$. Then geodesics of \eqref{eq:metric_aho} are formed by the curves
\begin{equation}\label{eq:geodesics_explicit}
(y,x(y))=\left(x,C_{4}\pm \frac{y}{C_{3}}\sqrt{C_{3}+2\delta y^{n+1}}\, {}_{2}F_{1}\left(\frac{n+3}{2n+2},1;\frac{n+2}{n+1}; \frac{2\delta}{C_{3}}y^{n+1}\right) \right),
\end{equation}
where $C_{3}\neq0$ and $C_{4}$ are arbitrary constants. If $C_{3}=0$, then geodesics of \eqref{eq:metric_aho} are given by
\begin{equation}\label{eq:geodesics_explicit_a}
 (y,x(y))=\left(x, C_{5}\pm\frac{1-n}{2\sqrt{-2\delta}}y^{\frac{1-n}{2}}\right),
\end{equation}
where $C_{5}$ is an arbitrary constant. Let us remark that, without loss of generality, one can assume $\delta$ equal to $-1$ in \eqref{eq:geodesics_explicit_a}, by redefining the constant $C_{1}$ in metric \eqref{eq:metric_aho}.

Let us also remark that by directly checking conditions from \cite{Matveev2011} for the existence of a superintegrable metric with a linear and a cubic integrals, one can show that at $n=-1/2$ \eqref{eq:T_ahp} degenerates into a cubic polynomial with respect to momenta. At any other value of $n\neq-1,0,1$ metric \eqref{eq:metric_aho} does not satisfy the conditions from \cite{Matveev2011}.

Now let us demonstrate two examples of Hamiltonian \eqref{eq:H_aho} with polynomials first integrals. Suppose that $k=1$ or $n=-5/3$, $\delta=1$, $C_{1}=1$ and $C_{2}=0$. Then, we obtain that the Hamiltonian
\begin{equation}\label{eq:H_p4}
  H=\frac{p_{1}^{2}}{y^{2/3}}+\frac{2p_{2}^{2}}{y^{4/3}},
\end{equation}
possesses the fourth order polynomial first integral
\begin{equation}\label{eq:I_p4}
T_1=\frac{x}{y^{2/3}}p_{1}^{4}-\frac{3}{y^{1/3}}p_{2}p_{1}^{3}+\frac{4x}{y^{2}}p_{2}^{2}p_{1}^{2}-\frac{2}{y}p_{1}p_{2}^{3}+\frac{4x}{y^{8/3}}p_{2}^{4}.
\end{equation}
If $k=2$ or $n=-7/5$ and $C_{1}$, $C_{2}$ and $\delta$ the same as in the previous case, we find that the Hamiltonian
\begin{equation}\label{eq:H_p6}
  H=\frac{p_{1}^{2}}{y^{2/5}}+\frac{2p_{2}^{2}}{y^{4/3}},
\end{equation}
admits the following sixth order polynomial first integral
\begin{equation}\label{eq:I_p6}
  T_{2}=\frac {x}{y^{6/5}}p_{1}^{6}-\frac {5 }{y^{1/5}}p_{2}p_{1}^{5}+\frac {6x}{y^{8/5}}p_{2}^{2}p_{1}^{4}-\frac {20}{3\,y^{3/5}}p_{2}^{3}p_{1}^{3}+\frac {12x}{y^{2}}p_{2}^{4}p_{1}^{2}-4\frac {4}yp_{2}^{5}p_{1}+\frac{8x}{y^{12/5}}p_{2}^{6}.
\end{equation}

In this section we have demonstrated that there is a superintegrable Riemannian metric with a linear and a transcendental first integrals. This transcendental first integral degenerates at certain values of the parameter $n$ into a polynomial one of an arbitrary even degree.

\section{General construction of a superintegrable metric from an autonomous equation from \eqref{eq:cubic_eq}}
In this Section we generalize the construction presented above for an autonomous equation from \eqref{eq:cubic_eq}.


Consider an autonomous equation from \eqref{eq:cubic_eq}
\begin{equation}\label{eq:cubic_autonomous}
  y_{xx}+k(y)y_{x}^{3}+h(y)y_{x}^{2}+f(y)y_{x}+g(y)=0,
\end{equation}
where $k$, $h$, $f$, $g$ are sufficiently smooth functions that do not vanish simultaneously.

Suppose that \eqref{eq:cubic_autonomous} is metrisable, i.e. the Liouville system \eqref{eq:Liouville} is compatible for coefficients of \eqref{eq:cubic_autonomous}. Suppose also that there exists such non-degenerate solution ( i.e. $\Delta=\psi_{1}\psi_{3}-\psi_{2}^{2}\neq0$) of the Liouville system \eqref{eq:Liouville} that $\psi_{1,x}=\psi_{2,x}=\psi_{3,x}=0$. Finally, we assume that \eqref{eq:cubic_autonomous} possesses an explicit representation for its non-autonomous first integral.

It is clear that under the above assumptions the Hamiltonian that corresponds to \eqref{eq:cubic_autonomous} does not depend explicitly on $x$ and, hence, has the cycling coordinate $x$ and the linear first integral $L=p_{1}$. When one lifts the non-autonomous first integral of \eqref{eq:cubic_autonomous}, the resulting first integral, say $F(x,y,p_{1},p_{2})$, of the Hamiltonian system will depend explicitly on $x$ in contrast to the Hamiltonian and $L$. Consequently, this first integral will be functionally independent of the Hamiltonian and the linear integral. Indeed, the corresponding Jacobi matrix will be
\begin{equation}\label{eq:Jacobi_m}
  J=\left(
      \begin{array}{cccc}
        0 & H_{y} & H_{p_{1}} & H_{p_{2}} \\
        F_{x} & F_{y} & F_{p_{1}} & F_{p_{2}} \\
        0 & 0 & 1 & 0 \\
      \end{array}
    \right).
\end{equation}
Now it is also clear that the autonomous first integral of \eqref{eq:cubic_autonomous}, will be a function of the Hamiltonian and the linear integral.

Consequently, one gets the following result:
\begin{proposition}
\label{th:th3}
  Suppose that an equation from \eqref{eq:cubic_autonomous} is metrisable and possesses an explicit non-autonomous first integral. Then, if there exists a non-degenerate solution of the Liouville system that depends only on $y$, this equation will lead to a superintegrable two-dimensional Riemannian metric.
\end{proposition}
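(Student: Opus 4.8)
The plan is to establish superintegrability by producing three functionally independent first integrals of the geodesic Hamiltonian system and verifying that their Jacobian has full rank, exactly as was done for the anharmonic oscillator in Section 2. First I would use the hypotheses to pin down the structure of the Hamiltonian. By the metrisability assumption, the Liouville system \eqref{eq:Liouville} admits a non-degenerate solution $\psi_1,\psi_2,\psi_3$, and by the additional hypothesis this solution depends only on $y$, i.e. $\psi_{1,x}=\psi_{2,x}=\psi_{3,x}=0$. Inverting the Liouville change of variables \eqref{eq:Liouville_1} recovers the metric components $g_{ij}$, which are then functions of $y$ alone. Feeding these into the Hamiltonian \eqref{eq:geodesics_H}, I obtain an $H=\tfrac12(g^{11}p_1^2+2g^{12}p_1p_2+g^{22}p_2^2)$ in which the coefficients do not depend on $x$. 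Hence $x$ is a cyclic coordinate and, as noted in the excerpt, $L=p_1$ is immediately a linear first integral, while $H$ itself is conserved.

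The second step is to produce the third integral from the assumed explicit non-autonomous first integral of the projected equation \eqref{eq:cubic_autonomous}. I would lift it to phase space via the standard relation $y_x=H_{p_2}/H_{p_1}$ (the same device used for \eqref{eq:R_ahp} and \eqref{eq:T_ahp}), substituting this expression for $y_x$ into the non-autonomous integral to obtain a function $F(x,y,p_1,p_2)$. Because the original integral was genuinely non-autonomous, $F$ depends explicitly on $x$, and along solutions of the Hamiltonian system $\dot F=0$ since the lift carries first integrals of the projection to first integrals of the flow. Crucially, $F_x\neq0$, which distinguishes $F$ from both $H$ and $L$, each of which is $x$-independent.

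The final step is the functional-independence check, and this is also where the only real subtlety lies. The Jacobian of $(H,F,L)$ with respect to $(x,y,p_1,p_2)$ is precisely the matrix \eqref{eq:Jacobi_m}, whose rows are $(0,H_y,H_{p_1},H_{p_2})$, $(F_x,F_y,F_{p_1},F_{p_2})$ and $(0,0,1,0)$. I would argue that this $3\times4$ matrix has rank $3$ generically: the third row contributes the $p_1$-direction, the second row contributes the $x$-direction through the entry $F_x\neq0$ (which cannot be cancelled by the other two rows, both of which vanish in the $x$-column), and the first row contributes a further independent direction through $H_y$ or $H_{p_2}$, since $H$ is non-constant in the remaining variables. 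The main obstacle to make this fully rigorous is ruling out degenerate configurations in which $H$, $L$ and $F$ become functionally dependent on an open set despite $F_x\neq0$; here I would lean on the non-degeneracy condition $\Delta\neq0$ (which guarantees the metric, hence $H$, is genuinely two-dimensional and non-degenerate) together with the explicit form inherited from the lift to exclude such coincidences. Once full rank is established on a dense open set, the three integrals are functionally independent, the geodesic Hamiltonian system has $2\cdot1+1=3$ independent integrals with two degrees of freedom, and the metric is superintegrable, which is the assertion of Proposition~\ref{th:th3}.
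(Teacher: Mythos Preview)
Your proposal is correct and follows essentially the same argument as the paper: the paper's proof (given in the discussion immediately preceding the proposition) likewise observes that the $y$-dependent metric yields a cyclic coordinate $x$ and hence $L=p_1$, lifts the non-autonomous integral to a function $F$ with $F_x\neq0$, and appeals to the very same Jacobian \eqref{eq:Jacobi_m} to claim functional independence. If anything, your treatment of the rank argument is more careful than the paper's, which simply displays the matrix and declares the conclusion clear.
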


All equations of form \eqref{eq:cubic_autonomous} that are metrisable with $\psi_{i,x}=0$, $i=1,2,3$ can be described as follows
\begin{theorem}
\label{th:th4}
  Equation \eqref{eq:cubic_autonomous} is metrisable with the metric that does not explicitly depend on $x$ if and only if one of the following correlations holds
  \begin{equation}\label{eq:mc_1}
    (I) \quad 27kg^{2}-9hfg+2f^{3}+9gf_{y}-9fg_{y}=0, \quad f\neq0, \quad k\neq0,
  \end{equation}
    \begin{equation}\label{eq:mc_2}
    (II)\quad 2f^{3}-9hfg+9gf_{y}-9fg_{y}=0,\quad f\neq0, \quad g\neq0,
  \end{equation}
    \begin{equation}\label{eq:mc_3}
    (III)\quad k=f=0, \quad g\neq0,
  \end{equation}
     \begin{equation}\label{eq:mc_4}
    (IV)\quad f=g=0, \quad k\neq0,
  \end{equation}
   \begin{equation}\label{eq:mc_5}
    (V)\quad f=g=k=0, \quad h\neq0
  \end{equation}
The solution of the Liouville system in each case can be found from the following relations
\begin{equation}\label{eq:ls_1}
\begin{gathered}
    (I)\quad \psi_{2}=\frac{2h\psi_{3}-3\psi_{3,y}}{6k}, \quad \psi_{1}=\frac{3g}{f}\psi_{2}, \\
    9fk \psi_{3,yy}-3B\psi_{3,y}-2(2kfh^{2}-6f^{2}k^{2}+3kfhy_{y}-Bh)\psi_{3}=0, \\
    B=fhk-9gk^2+3fk_{y}, \quad 2h\psi_{3}-3\psi_{3,y}\neq0,
    \end{gathered}
  \end{equation}
    \begin{equation}\label{eq:ls_2}
    (II)\quad \psi_{1}=\frac{3g\psi_{2}}{f}, \quad \psi_{2,y}=\frac{2f\psi_{3}-h\psi_{2}}{3}, \quad \psi_{3,y}=\frac{2h\psi_{3}}{3}, \quad \psi_{2}\neq0
  \end{equation}
    \begin{equation}\label{eq:ls_3}
    (III)\quad \psi_{1,y}=2g\psi_{3}-\frac{4h\psi_{1}}{3}, \quad \psi_{3,y}=\frac{2h\psi_{3}}{3}, \quad \psi_{2}=0, \quad \psi_{1}\psi_{3}\neq0,
  \end{equation}
     \begin{equation}\label{eq:ls_4}
    (IV)\quad \psi_{1,y}=-\frac{4h\psi_{1}}{3}, \quad \psi_{2,y}=-k\psi_{1}-\frac{h\psi_{2}}{3}, \quad \psi_{3,y}=\frac{2h\psi_{3}}{3}-2 k\psi_{2},
  \end{equation}
   \begin{equation}\label{eq:ls_5}
    (V)\quad \psi_{1,y}=-\frac{4h\psi_{1}}{3},\quad \psi_{2,y}=-\frac{h\psi_{2}}{3}, \quad \psi_{3,y}=\frac{2h\psi_{3}}{3}.
  \end{equation}
In all cases it should be assumed that $\psi_{1}\psi_{3}-\psi_{2}^{2}\neq0$.
\end{theorem}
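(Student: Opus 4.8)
The plan is to specialise the Liouville system \eqref{eq:Liouville} to the coefficients $a_{3}=k(y)$, $a_{2}=h(y)$, $a_{1}=f(y)$, $a_{0}=g(y)$ of \eqref{eq:cubic_autonomous} and to impose from the outset that the sought metric be independent of $x$, i.e. $\psi_{1,x}=\psi_{2,x}=\psi_{3,x}=0$. Under this ansatz the four equations of \eqref{eq:Liouville} collapse to one algebraic relation together with three ordinary differential equations in $y$,
\begin{equation}\label{eq:reduced_liouville}
\begin{gathered}
f\psi_{1}=3g\psi_{2}, \qquad \psi_{3,y}=-2k\psi_{2}+\frac{2}{3}h\psi_{3},\\
\psi_{1,y}=-\frac{4}{3}h\psi_{1}+\frac{2}{3}f\psi_{2}+2g\psi_{3}, \qquad 2\psi_{2,y}=-2k\psi_{1}+\frac{4}{3}f\psi_{3}-\frac{2}{3}h\psi_{2}.
\end{gathered}
\end{equation}
Since metrisability is equivalent to the existence of a solution of \eqref{eq:Liouville} with $\Delta=\psi_{1}\psi_{3}-\psi_{2}^{2}\neq0$ \cite{Liouville,Matveev2008,Dunajski2009}, the whole statement reduces to classifying, according to which of $f$, $g$, $k$ vanish identically, when the reduced system \eqref{eq:reduced_liouville} admits a non-degenerate solution.

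The computational heart is the case $f\neq0$, which will produce \eqref{eq:mc_1} and \eqref{eq:mc_2}. Here the algebraic relation gives $\psi_{1}=3g\psi_{2}/f$, and non-degeneracy forces $\psi_{2}\neq0$, since $\psi_{2}=0$ entails $\psi_{1}=0$ and hence $\Delta=0$. The key move is to differentiate $f\psi_{1}=3g\psi_{2}$ in $y$ and to eliminate $\psi_{1,y}$ and $\psi_{2,y}$ using the two remaining equations of \eqref{eq:reduced_liouville}. I expect the coefficient of $\psi_{3}$ to cancel identically (it equals $2fg-2fg=0$), leaving a purely algebraic relation $(f_{y}-\frac{4}{3}fh+3gk)\psi_{1}+(\frac{2}{3}f^{2}+gh-3g_{y})\psi_{2}=0$; substituting $\psi_{1}=3g\psi_{2}/f$ and dividing by $\psi_{2}\neq0$ then yields exactly \eqref{eq:mc_1}. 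Because this computation never uses $k$, setting $k=0$ degenerates \eqref{eq:mc_1} into \eqref{eq:mc_2}, which explains why the two cases share a single condition; they differ only in the presentation of the solution, obtained either by solving the equation for $\psi_{3,y}$ for $\psi_{2}$, giving the second-order equation \eqref{eq:ls_1} for $\psi_{3}$ when $k\neq0$, or by integrating the first-order chain \eqref{eq:ls_2} directly when $k=0$. One also checks that $g\equiv0$ would force $\psi_{2}=0$ through the same algebraic relation, hence $g\neq0$ in both subcases.

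The remaining branches are handled in the same spirit but without any algebraic obstruction. If $f=0$ and $g\neq0$, the algebraic relation forces $\psi_{2}=0$, and then the equation for $\psi_{2,y}$ collapses to $k\psi_{1}=0$; since $\Delta=\psi_{1}\psi_{3}\neq0$ requires $\psi_{1}\neq0$, this forces $k=0$, landing in \eqref{eq:mc_3} with solution \eqref{eq:ls_3}. If $f=g=0$ the algebraic relation is vacuous, $\psi_{2}$ is free, and the three differential equations decouple into the triangular first-order system \eqref{eq:ls_4} when $k\neq0$, or \eqref{eq:ls_5} when $k=0$, the latter requiring $h\neq0$ by the standing assumption that $k,h,f,g$ do not vanish simultaneously. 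These exhaust all possibilities for the vanishing pattern of $(f,g,k)$ and establish the ``only if'' direction.

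For the converse I would run each construction backwards: assuming the relevant condition among \eqref{eq:mc_1}--\eqref{eq:mc_5}, solve the corresponding linear ordinary system \eqref{eq:ls_1}--\eqref{eq:ls_5} and verify directly that \eqref{eq:Liouville} holds. The one point that genuinely requires care --- and which I regard as the main obstacle --- is guaranteeing that the solution can be chosen with $\Delta\neq0$, since a priori these homogeneous linear systems could force $\Delta$ to vanish identically. I expect to settle this by a transport argument: in cases \eqref{eq:mc_4}--\eqref{eq:mc_5} a direct computation along solutions gives $\Delta_{y}=-\frac{2}{3}h\,\Delta$, so $\Delta$ is a fixed exponential multiplied by a constant fixed by the integration data, and $\Delta\neq0$ is secured by an open choice of constants; in case \eqref{eq:mc_3} one has $\Delta=\psi_{1}\psi_{3}$ with both factors non-vanishing solutions of first-order linear equations; and the cases $f\neq0$ are handled by exhibiting an explicit non-degenerate solution of the second-order equation in \eqref{eq:ls_1}. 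Verifying positivity of $\Delta$ in each branch completes the equivalence.
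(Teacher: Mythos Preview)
Your proposal is correct and follows essentially the same strategy as the paper's own proof, which is a two-sentence sketch stating only that the Liouville system, under the ansatz $\psi_{i,x}=0$, becomes an overdetermined ODE system whose integrability conditions are \eqref{eq:mc_1}--\eqref{eq:mc_5}. You carry out explicitly what the paper leaves implicit: writing down the reduced system \eqref{eq:reduced_liouville}, deriving the single algebraic compatibility condition in the $f\neq0$ branch by differentiating $f\psi_{1}=3g\psi_{2}$ and eliminating $\psi_{1,y},\psi_{2,y}$, and then handling the remaining vanishing patterns of $(f,g,k)$ by direct inspection; you also supply the transport argument $\Delta_{y}=-\tfrac{2}{3}h\Delta$ in cases (IV)--(V) to guarantee non-degeneracy, which the paper does not discuss.

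Two minor remarks. First, the sentence ``this computation never uses $k$'' is slightly off: $k$ does appear in the derived relation via the $3gk\,\psi_{1}$ term, and it is precisely this term that produces $27kg^{2}$ in \eqref{eq:mc_1}; what you mean is that the derivation is uniform in $k$, and specialising to $k=0$ collapses \eqref{eq:mc_1} to \eqref{eq:mc_2}. Second, when $f\neq0$ and $g\equiv0$ the algebraic relation forces $\psi_{1}=0$ (not $\psi_{2}=0$ directly); one then uses the third equation of \eqref{eq:reduced_liouville} to conclude $\psi_{2}=0$ and hence $\Delta=0$, so the exclusion of $g\equiv0$ in both (I) and (II) is justified but goes through $\psi_{1}$ first. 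Neither point affects the validity of your argument.
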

\begin{proof}
  The proof is straightforward. Assuming that all $\psi_{i}$, $i=1,2,3$ do not depend on $x$, the Liouville system for \eqref{eq:cubic_autonomous} becomes an overdetermined system of ordinary differential equations. Its integrability conditions can be easily computed and are presented in \eqref{eq:mc_1}-\eqref{eq:mc_5}.
\end{proof}

It is worth noting that all families of equations described in Theorem \ref{th:th4} are completely integrable. The first family, given by \eqref{eq:mc_1}, is a particular case of cubic oscillators that can be linearized via the generalized nonlocal transformations (see \cite{Sinelshchikov2020}). The second family, defined by \eqref{eq:mc_2}, is equivalent to the harmonic oscillator $w_{\xi\xi}+\beta w_{\xi}+\alpha w=0$ at $\alpha=2\beta^{2}/9$ via the Sundman transformations (see \cite{Sinelshchikov2020a}). The case of \eqref{eq:mc_3} is also linearizable via the Sundman transformations to the $w_{\xi\xi}=0$. The remaining two cases can be integrated in  an obvious way. While this guarantees the existence of an explicit autonomous first integral, for the generating a superintegrable metric one needs an explicit expression for the non-autonomous first integral, finding which may be a non-trivial problem. Finally, we remark that classification presented in Theorem \ref{th:th4} is up to point transformations since they preserve metrisability. It is also worth noting that various particular cases of linearizable equations that are given by \eqref{eq:mc_1} and \eqref{eq:mc_2} are applicable in physics, mechanics and biology (see, e.g. \cite{Sinelshchikov2020,Sinelshchikov2020a} and references therein). This provides an interesting connection between integrable metrics with a linear first integral and  physically, mechanically and biologically relevant autonomous oscillators.

On the other hand, integrability of the equations described in Theorem \ref{th:th4} follows from the fact that the corresponding metric is integrable with a linear integral. One can find autonomous first integrals for equations defined by \eqref{eq:mc_1}-\eqref{eq:mc_5} by projecting a combination of the Hamiltonian and the linear first integral on the $(x,y)$ plane. In other words, suppose that a metric with $\psi_{1,x}=\psi_{2.x}=\psi_{3,x}$ corresponds to an equation from \eqref{eq:cubic_autonomous}. Then the corresponding Hamiltonian system for geodesics has two first integrals: the Hamiltonian and the linear first integral. Their combination can be projected to an autonomous first integral of an equation defined by \eqref{eq:mc_1}-\eqref{eq:mc_5}.

Now we deal with a canonical form of the equations described in Theorem \ref{th:th4}. It is well known (see \cite{Bolsinov1998}) that any two-dimensional Riemannian metric with a linear first integral can be transformed into
\begin{equation}\label{eq:metric_linear}
  ds^{2}=\lambda(y)(dx^{2}+dy^{2}).
\end{equation}
Here $\lambda\neq0$ is sufficiently smooth function.

The corresponding Hamiltonian is
\begin{equation}\label{eq:Hamiltonian_linear}
  H=\frac{p_{1}^{2}+p_{2}^{2}}{2\lambda}.
\end{equation}
An equation from \eqref{eq:cubic_eq} that corresponds to \eqref{eq:metric_linear} has the form
\begin{equation}\label{eq:linear_canonical}
  y_{xx}-\frac{\lambda_{y}}{2\lambda}y_{x}^{2}-\frac{\lambda_{y}}{2\lambda}=0.
\end{equation}
As a result, we have the following statement
\begin{theorem}
\label{th:th5}
  Any metrisable equation from families described in Theorem \ref{th:th4} can be transformed into \eqref{eq:linear_canonical} via point transformations with some particular function $\lambda$.
\end{theorem}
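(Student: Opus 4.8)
The plan is to assemble the statement from three ingredients: the fact that each family in Theorem~\ref{th:th4} carries a metric admitting a linear first integral, the normal-form result of \cite{Bolsinov1998} quoted just above, and the observation that point transformations act naturally on the projective equations \eqref{eq:cubic_eq} while preserving metrisability. Concretely, I would show (i) that every metric produced by Theorem~\ref{th:th4} admits the linear integral $L=p_{1}$; (ii) that, by \cite{Bolsinov1998}, such a metric can be brought to the conformally flat form \eqref{eq:metric_linear} by a coordinate change on the surface, which is precisely a point transformation of $(x,y)$; and (iii) that the projective equation of \eqref{eq:metric_linear} computed from \eqref{eq:pre_Liouville} is exactly \eqref{eq:linear_canonical}. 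Since a point transformation sends the projective equation of a metric to the projective equation of the transformed metric, chaining (i)--(iii) yields the claim.

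For step (i), recall that Theorem~\ref{th:th4} provides, in each of the five cases, a non-degenerate solution of the Liouville system with $\psi_{i,x}=0$, $i=1,2,3$. Inverting the Liouville substitution \eqref{eq:Liouville_1}, one has $g_{ij}=\psi_{i}/\Delta^{2}$ with $\Delta=\psi_{1}\psi_{3}-\psi_{2}^{2}$; since all $\psi_{i}$ depend on $y$ only, so does $\Delta$, and therefore every component $g_{ij}$ of the metric depends on $y$ alone. Hence the Hamiltonian \eqref{eq:geodesics_H} has $x$ as a cyclic coordinate and admits the first integral $L=p_{1}$, linear in the momenta. This is the same mechanism used in Proposition~\ref{th:th3}.

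For steps (ii) and (iii), the Bolsinov normal form \eqref{eq:metric_linear} is reached through a change of the surface coordinates: one first chooses coordinates in which the Killing field associated with $L$ is $\partial_{x}$ (making the metric $x$-independent), and then rectifies the $y$-coordinate to produce the conformal factor $\lambda(y)$. Both are transformations of $(x,y)$, i.e. point transformations, and the discussion following Theorem~\ref{th:th4} already notes that such transformations preserve metrisability. Substituting $g_{11}=g_{22}=\lambda(y)$, $g_{12}=0$ into \eqref{eq:pre_Liouville} gives $a_{3}=a_{1}=0$ and $a_{0}=a_{2}=-\lambda_{y}/(2\lambda)$, which is precisely \eqref{eq:linear_canonical}; the function $\lambda$ is the one furnished by \cite{Bolsinov1998}, which accounts for the qualifier ``some particular function $\lambda$''.

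The step I expect to require the most care is the covariance claim used implicitly throughout: that the coordinate changes realizing the normal form act on the unparametrised geodesic equation by carrying the projective equation \eqref{eq:cubic_eq} of the original metric to that of the image metric. Here one must invoke that \eqref{eq:cubic_eq} encodes the projective class of the metric (its geodesics up to reparametrisation), so that a point transformation of the base maps geodesics to geodesics and hence one cubic equation to another. Granting this standard fact --- which is exactly what licenses the ``up to point transformations'' remark after Theorem~\ref{th:th4} --- the three steps combine to give the result.
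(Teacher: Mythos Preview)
Your proposal is correct and follows essentially the same approach as the paper. The paper does not supply a separate proof block for this theorem; instead, the argument is contained in the paragraph immediately preceding the statement (citing \cite{Bolsinov1998} for the normal form \eqref{eq:metric_linear} and then computing its projective equation \eqref{eq:linear_canonical}), and your steps (i)--(iii) simply make that reasoning explicit, with the added benefit of spelling out the covariance of the projective equation under point transformations.
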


In this Section we have found equations from \eqref{eq:cubic_autonomous} that will generate a superintegrable two-dimensional metric once the explicit expression for their non-autonomous first integral is known. These equations can also be transformed into canonical form \eqref{eq:linear_canonical}.

\section{Li\'enard equations and equivalence class of the anharmonic oscillator}
In this section we construct a class of Li\'enard equations
\begin{equation}\label{eq:Lienard}
w_{\xi\xi}+f(w)w_{\xi}+g(w)=0,
\end{equation}
that is equivalent to \eqref{eq:anho} via point transformations
\begin{equation}\label{eq:point_trans}
  y=F(\xi,w), \quad x=G(\xi,w).
\end{equation}
We will show that this family of Li\'enard equations is Liouvillian integrable with a first integral that is expressed in terms of the hypergeometric function. The following statement holds:

\begin{theorem}
\label{th:th1}
  Suppose that $n\neq0,\pm1$. Then equation \eqref{eq:Lienard} is equivalent to \eqref{eq:anho} via \eqref{eq:point_trans} if one of the following correlations holds
  \begin{equation}\label{eq:criteria}
   \begin{gathered}
    (I)\quad f=\frac{2C_{1}n}{n-1}-\frac{3C_{1}MM_{ww}}{2M_{w}^{2}}, \quad
             g=\frac{C_{1}^{2}(n+1)M}{(n-1)M_{w}}-\frac{C_{1}^{2}M^{2}M_{ww}}{2M_{w}^{3}},
   \end{gathered}
  \end{equation}
    \begin{equation}\label{eq:criteria_fp}
   \begin{gathered}
    (II)\quad f=\alpha, \quad
             g=\frac{2(n+1)\alpha^{2}}{(n+3)^{2}}w+\delta w^{n},\quad n\neq-3,
   \end{gathered}
  \end{equation}
        \begin{equation}\label{eq:criteria_fp_n=-3}
   \begin{gathered}
    (III)\quad f=0, \quad
             g=-w-\delta w^{-3}, \quad n=-3.
   \end{gathered}
  \end{equation}

Equivalence transformations in all cases are given by
  \begin{equation}\label{eq:tr_1}
   (I)\quad F=\left(\frac{{\rm e}^{-2C_{1}\xi}(2M_{w}M_{www}-3M_{ww}^{2})}{4n\delta(n+1)M_{w}^{2}}\right)^{\frac{1}{n-1}}, \quad
    G={\rm e}^{C_{1}\xi}M,
  \end{equation}
  \begin{equation}\label{eq:tr_2}
   (II)\quad F=(n+1)^{-\frac{1}{n-1}}w{e}^{\frac{2\alpha \xi}{n+3}}, \quad  G=\frac{(n+3)}{\alpha (n-1)}{\rm e}^{-\frac{\alpha(n-1)\xi}{n+3}},\,n\neq-3,
  \end{equation}
    \begin{equation}\label{eq:tr_4}
   (III)\quad F=2{\rm e}^{\xi}w, \quad   G=\sqrt{2}{\rm e}^{2\xi}, \quad n=-3.
  \end{equation}

Here $M\neq0$ is a solution of the equation
\begin{equation}\label{eq:M_eq}
\begin{gathered}
  4nM_{w}^{2} \left(2 nM_{w}^{2}+MM_{ww}+2M_{w}^{2}-nMM_{ww} \right) M_{wwww}+4 \left( n-1\right) ^{2}MM_{w}^{2}M_{ww}^{2}-\\
  -4M_{ww}M_{w} \left(14 M_{w}^{2}{n}^{2} -3 M{n}^{2} M_{ww}+10 M_{w}^{2}n+3 MM_{ww}\right) M_{www}+\\+3 \left( 5 n+3 \right) M_{ww}^{3} \left(4\,M_{w}^{2}n+MM_{ww}-nMM_{ww} \right)=0
  \end{gathered}
\end{equation}
and $C_{1}\neq0$ is an arbitrary constant. Notice that families \eqref{eq:criteria}, \eqref{eq:criteria_fp} and \eqref{eq:criteria_fp_n=-3} are defined up to shifts and scalings in the independent and dependent variables.
\end{theorem}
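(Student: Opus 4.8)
The plan is to impose the required equivalence directly and then solve the resulting overdetermined system for the transformation. Regarding $w$ as a function of $\xi$ and substituting \eqref{eq:point_trans} into \eqref{eq:anho}, the chain rule gives $y_x=(F_\xi+F_w w_\xi)/(G_\xi+G_w w_\xi)$, and applying $(G_\xi+G_w w_\xi)^{-1}\partial_\xi$ once more produces $y_{xx}$ as a rational function of $w_\xi,w_{\xi\xi}$ with denominator $(G_\xi+G_w w_\xi)^{3}$. Inserting this into $y_{xx}+\delta(n+1)F^{n}=0$, clearing the denominator and solving for $w_{\xi\xi}$ yields an equation of the cubic form \eqref{eq:cubic_eq} in $w_\xi$, with coefficients $\tilde a_j(\xi,w)$, $j=\overline{0,3}$, expressible through $F$, $G$ and their partial derivatives; this is to be expected, since point transformations preserve the class \eqref{eq:cubic_eq}.

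Matching this image against the Li\'enard form $w_{\xi\xi}+f(w)w_\xi+g(w)=0$ imposes $\tilde a_3=0$, $\tilde a_2=0$, $\partial_\xi\tilde a_1=0$ and $\partial_\xi\tilde a_0=0$, after which one reads off $f=\tilde a_1$ and $g=\tilde a_0$. These four requirements constitute an overdetermined system of partial differential equations for the two unknowns $F(\xi,w)$ and $G(\xi,w)$, and I would split the analysis according to whether the change of the independent variable involves $w$, i.e.\ according to whether $G_w\equiv 0$.

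If $G_w\equiv 0$, then $x=G(\xi)$ is a pure reparametrization, the cubic coefficient $\tilde a_3$ vanishes identically, and the vanishing of the quadratic coefficient forces $F_{ww}=0$, so that $F=A(\xi)w+B(\xi)$ is affine in $w$; up to the admissible shift one takes $B=0$. The surviving conditions then reduce to ordinary differential equations for $A(\xi)$ and $G(\xi)$ whose integration produces exponential profiles and delivers the constant-$f$ family \eqref{eq:criteria_fp} with transformation \eqref{eq:tr_2}, while the degenerate value $n=-3$---at which the coefficient $2(n+1)\alpha^2/(n+3)^2$ and the exponent $1/(n-1)$ of the general formulas break down---must be handled on its own and yields \eqref{eq:criteria_fp_n=-3} with \eqref{eq:tr_4}. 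If instead $G_w\neq 0$, then $\tilde a_3=0$ becomes a genuine constraint that ties the $w$-dependence of $F$ to that of $G$; writing $G=\mathrm{e}^{C_1\xi}M(w)$, with the exponential $\xi$-profile forced by $\partial_\xi\tilde a_0=\partial_\xi\tilde a_1=0$, and $F=\mathrm{e}^{-2C_1\xi/(n-1)}\Phi(w)$, one finds $\Phi$ determined as the indicated $(n-1)$-th root of the Schwarzian-type combination $2M_wM_{www}-3M_{ww}^{2}$ of $M$, which gives case (I) together with \eqref{eq:tr_1}.

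The main obstacle is the reduction in the branch $G_w\neq 0$: after the exponential ansatz, showing that the two autonomy conditions $\partial_\xi f=\partial_\xi g=0$ are jointly equivalent to the single fourth-order ordinary differential equation \eqref{eq:M_eq} for $M$, with $f$ and $g$ then given by the stated expressions \eqref{eq:criteria}, is the technically heavy step, and it is where the precise coefficient bookkeeping is essential. Once the forms of $F$, $G$, $f$, $g$ have been obtained in each branch, the converse---that each listed pair indeed maps \eqref{eq:anho} into \eqref{eq:Lienard}---follows by substituting the explicit transformations \eqref{eq:tr_1}, \eqref{eq:tr_2} and \eqref{eq:tr_4} back into \eqref{eq:anho}, a direct verification that establishes the sufficiency asserted in the statement.
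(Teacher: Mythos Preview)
Your plan matches the paper's own proof: it likewise substitutes \eqref{eq:point_trans} into \eqref{eq:anho}, collects the resulting equation into the cubic form in $w_\xi$, splits the analysis into the fiber-preserving case $G_w=0$ and the case $G_w\neq 0$, and in the former derives $F_{ww}=0$ together with exponential profiles in $\xi$ (leading to \eqref{eq:criteria_fp}, with $n=-3$ treated separately), while in the latter it simply asserts that the same procedure goes through. The paper actually provides more intermediate detail in the $G_w=0$ branch (systems \eqref{eq:th4_e1}--\eqref{eq:th4_e4}) and less in the $G_w\neq 0$ branch than you do, but the overall strategy and the key reductions are the same.
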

\begin{proof}
One needs to consider two different cases separately, namely the case of $G_{w}\neq0$ and the case of $G_{w}=0$. The latter one corresponds to the fiber-preserving transformations.

Suppose that $G_{w}=0$. We also assume that $F_{w}H_{\xi}\neq0$, since otherwise transformations \eqref{eq:point_trans} degenerate. Substituting \eqref{eq:point_trans} into \eqref{eq:anho} we obtain an equation of the form $w_{\xi\xi}+\tilde{h}(\xi,w)w_{\xi}^{2}+\tilde{f}(\xi,w)w_{\xi}+\tilde{g}(\xi,w)=0$, whose coefficients are functions of $F$ and $G$.

Then we require that $\tilde{h}=\tilde{f}_{\xi}=\tilde{g}_{\xi}=0$. This leads to a system of three equations for two functions $F$ and $G$, which is
\begin{equation}\label{eq:th4_e1}
  F_{ww}=0, \quad \left(\ln\left\{\frac{F_{w}^{2}}{G_{\xi}}\right\}\right)_{\xi\xi}=0, \quad
 \left\{ \frac{G_{\xi}}{F_{w}}\left [\left(\frac{F_{\xi}}{G_{\xi}}\right)_{\xi}+\delta(n+1)F^{n}G_{\xi}\right]\right\}_{\xi}=0.
\end{equation}
Now we need to demonstrate that this system is compatible and find its solution. We differentiate the last equation from \eqref{eq:th4_e1} with respect to $y$ for three times and use the first two equations to exclude $F_{ww}$ and $F_{w\xi\xi}$ and their differential consequences. Consequently, we arrive at
\begin{equation}\label{eq:th4_e2}
\begin{gathered}
  \left\{\frac{G_{\xi\xi\xi}}{G_{\xi}}-\frac{3}{2}\frac{G_{\xi\xi}^{2}}{G_{\xi}^{2}} +2\delta  n (n+1) F^{n-1}G_{\xi}^{2} \right\}_{\xi}=0, \\
    \left\{\ln\left(F^{n-2}F_{w}G_{\xi}^{2}\right) \right\}_{\xi}=0, \quad \left\{\ln\left( F_{w}^{n-1}G_{\xi}^{2}\right) \right\}_{\xi}=0.
  \end{gathered}
\end{equation}
One can see that the further differentiation with respect to $w$ will not lead to any new relations on $F$ and $G$.

Now using the last equation from \eqref{eq:th4_e2} and second equations from \eqref{eq:th4_e2} and \eqref{eq:th4_e1} we get
\begin{equation}\label{eq:th4_e3}
\begin{gathered}
 FF_{\xi w}-F_{\xi}F_{w}=0, \quad FF_{\xi\xi}-F_{\xi}^{2}=0.
  \end{gathered}
\end{equation}
In order to obtain an equation for $G$ we substitute \eqref{eq:th4_e3} into the last equation from \eqref{eq:th4_e2} to obtain
\begin{equation}\label{eq:th4_e4}
\begin{gathered}
 2FG_{\xi\xi}+(n-1)G_{\xi}F_{\xi}=0.
  \end{gathered}
\end{equation}
The last equation from \eqref{eq:th4_e1} and the first equation from \eqref{eq:th4_e2} are automatically satisfied if one takes into account \eqref{eq:th4_e3} and \eqref{eq:th4_e4}.

Solving \eqref{eq:th4_e3} and \eqref{eq:th4_e4} together with the first equation from \eqref{eq:th4_e1} for the functions $F$ and $G$ we find
\begin{equation}\label{eq:th4_e5}
\begin{gathered}
F={\rm e}^{c_{1}\xi}\left(c_{2}y+c_{3}\right), \quad G=c_{4}+c_{5}{\rm e}^{-\frac{c_{1}(n-1)\xi}{2}},
  \end{gathered}
\end{equation}
and the corresponding equivalence class of \eqref{eq:anho} with respect to \eqref{eq:th4_e5} is
\begin{equation}\label{eq:th4_e6}
\begin{gathered}
w_{\xi\xi}+\frac{(n+3)c_{1}}{2}w_{\xi}+\frac{\delta(n+1)(n-1)^{2}c_{1}^{2}c_{5}^{2}}{4c_{2}}(c_{2}w+c_{3})^{n}+\frac{(n+1)c_{1}^{2}}{2}w+\frac{(n+1)c_{1}^{2}c_{3}}{2c_{2}}=0.
  \end{gathered}
\end{equation}
Here $c_{i},\, i=\overline{1,5}$ are some constants such that $c_{1}c_{2}c_{5}\neq0$. Since we define equivalence classes up to shifts and scalings, without loss of generality, we set $c_{3}=c_{4}=0$ and $c_{2}=(n+1)^{-1/(n-1)}$. Parameter $c_{5}\neq0$ corresponds to the scaling of the independent variable and is assigned below. Therefore, equivalence class \eqref{eq:th4_e6} is parametrized by $c_{1}$.

In order to simplify the presentation of our results, we set $c_{1}=2\alpha/(n+3)$ and $c_{5}=(n+3)(n-1)^{-1}\alpha^{-1}$ and use the parameters $\alpha$ and $\delta$ instead of $c_{1}$ and $c_{5}$. Thus, the coefficients at $w_{\xi}$ and $w^{n}$ are equal to $\alpha$ and $\delta$, respectively. As a result, we obtain family \eqref{eq:criteria_fp} and transformations \eqref{eq:tr_2}.

The above reparametrization degenerates at $n=-3$, when the coefficient at the first derivative in \eqref{eq:th4_e6} vanishes and all parameters can be removed from \eqref{eq:th4_e6}. Therefore, we present this case separately in \eqref{eq:criteria_fp_n=-3} and \eqref{eq:tr_4}, where we set $c_{2}=2$, $c_{5}=\sqrt{2}$ and $c_{1}=1$. Moreover, this case can be also taken apart because at $n=-3$ equation \eqref{eq:anho} and, hence, its equivalence class with respect to transformations \eqref{eq:point_trans}, possesses a three-dimensional Lie algebra of point symmetries.

Finally, the general case of $G_{w}\neq0$ can be treated in the same way. This completes the proof.
\end{proof}

\begin{corollary}
Any equation that is defined by \eqref{eq:criteria}, \eqref{eq:criteria_fp}, or \eqref{eq:criteria_fp_n=-3} at $n\neq-1$ possesses two first integrals that can be obtained from \eqref{eq:anho_I_1} and \eqref{eq:anho_I_2} by substituting \eqref{eq:point_trans} with \eqref{eq:tr_1}, \eqref{eq:tr_2} or \eqref{eq:tr_4} into \eqref{eq:anho_I_1} or \eqref{eq:anho_I_2}. These integrals are
\begin{equation}\label{eq:fi_r}
J_{1}=\left(\frac{F_{\xi}+F_{w}w_{\xi}}{G_{\xi}+G_{w}w_{\xi}}\right)^{2}+2\delta F^{n+1},
\end{equation}
\begin{equation}
\label{eq:fi_t}
J_{2}=G-\frac{F_{\xi}+F_{w}w_{\xi}}{G_{\xi}+G_{w}w_{\xi}}\frac{F}{J_{1}}\, {}_{2}F_{1}\left(\frac{n+3}{2n+2},1;\frac{n+2}{n+1}; \frac{2\delta F^{n+1}}{J_{1}}\right).
\end{equation}
\end{corollary}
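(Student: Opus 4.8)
The plan is to exploit the elementary but decisive fact that point transformations carry first integrals to first integrals. Theorem \ref{th:th1} establishes that each equation defined by \eqref{eq:criteria}, \eqref{eq:criteria_fp}, or \eqref{eq:criteria_fp_n=-3} is mapped onto the anharmonic oscillator \eqref{eq:anho} by a transformation of the form \eqref{eq:point_trans}, so the pullbacks of the known integrals \eqref{eq:anho_I_1} and \eqref{eq:anho_I_2} are automatically integrals of the corresponding Li\'enard equation. Concretely, if $w(\xi)$ is any solution of the Li\'enard equation, then the curve $(x,y)=(G(\xi,w(\xi)),F(\xi,w(\xi)))$ is, after reparametrization, a solution of \eqref{eq:anho}; hence $I_{1}$ and $I_{2}$ are constant along this curve, and re-expressing them through $\xi$, $w$ and $w_{\xi}$ yields functions that are constant on solutions of the Li\'enard equation, i.e. its first integrals. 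Since \eqref{eq:point_trans} is invertible, functional independence of the pair is inherited from that of $I_{1}$ and $I_{2}$.

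The only computation required is the transformation rule for the first derivative. Treating $x$ and $y$ as functions of $\xi$ through \eqref{eq:point_trans} and applying the chain rule, I would record
\begin{equation*}
  y_{x}=\frac{dy/d\xi}{dx/d\xi}=\frac{F_{\xi}+F_{w}w_{\xi}}{G_{\xi}+G_{w}w_{\xi}},
\end{equation*}
which is precisely the quantity appearing in \eqref{eq:fi_r} and \eqref{eq:fi_t}. Substituting $y=F$ together with this expression into $I_{1}=y_{x}^{2}+2\delta y^{n+1}$ gives \eqref{eq:fi_r} directly, so that $J_{1}$ is nothing but $I_{1}$ written in the new variables. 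Replacing $x=G$, $y=F$, the same expression for $y_{x}$, and the identity $y_{x}^{2}+2\delta y^{n+1}=J_{1}$ into \eqref{eq:anho_I_2} then reproduces \eqref{eq:fi_t}. No genuine manipulation of the hypergeometric function is needed, because its argument and prefactor transform covariantly under the substitution.

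I expect no real obstacle here: the argument is purely a change of variables, and the corollary follows immediately once the pullback formula for $y_{x}$ is in hand. The two points deserving a line of care are, first, that the restriction $n\neq-1$ must be imposed so that the power $y^{n+1}$ and the hypergeometric parameters $\tfrac{n+3}{2n+2}$, $\tfrac{n+2}{n+1}$ in \eqref{eq:anho_I_2} remain well defined (this degenerate value is treated separately in \ref{sec:appendix2}); and second, that the denominator $G_{\xi}+G_{w}w_{\xi}$ does not vanish identically, which is guaranteed by the non-degeneracy of the point transformation \eqref{eq:point_trans} assumed in Theorem \ref{th:th1}. With these caveats noted, substituting the explicit transformations \eqref{eq:tr_1}, \eqref{eq:tr_2} or \eqref{eq:tr_4} into \eqref{eq:fi_r} and \eqref{eq:fi_t} produces the first integrals of each family in closed form, completing the proof.
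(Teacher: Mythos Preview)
Your proposal is correct and matches the paper's own treatment: the paper states this corollary without a separate proof, relying precisely on the fact that point transformations \eqref{eq:point_trans} carry first integrals of \eqref{eq:anho} to first integrals of the equivalent Li\'enard equation, with the chain-rule identity $y_{x}=(F_{\xi}+F_{w}w_{\xi})/(G_{\xi}+G_{w}w_{\xi})$ giving \eqref{eq:fi_r} and \eqref{eq:fi_t} by direct substitution.
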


Since transformations \eqref{eq:point_trans} depend explicitly on $\xi$, both integrals \eqref{eq:fi_r} and \eqref{eq:fi_t} can be non-autonomous, i.e. can depend explicitly on $\xi$. Therefore, any equation that is equivalent to \eqref{eq:anho} has two explicit non-autonomous first integrals. However, one can solve one of them for $\xi$, say $J_{1}$, and substitute the result into the other one, say $J_{2}$. As a result, we arrive to an autonomous first integral for systems described in Theorem \ref{th:th1}. Let us also recall that \eqref{eq:anho_I_2} is a Liouvillian function (see \ref{sec:appendix}) and, hence, \eqref{eq:fi_t} is also a Liouvillian function if $F$ and $G$ are Liouvillian functions. Consequently, the following statement holds:

\begin{theorem}
\label{th:th2}
 Any equation from \eqref{eq:Lienard} that is equivalent to \eqref{eq:anho} via \eqref{eq:point_trans} is Liouvillian integrable provided that $F$ and $G$ are Liouvillian functions. Otherwise, an equation from \eqref{eq:Lienard} that is equivalent to \eqref{eq:anho} is integrable with a transcendental first integral.
\end{theorem}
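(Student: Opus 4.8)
The plan is to leverage the corollary together with the explicit structure of the equivalence transformations \eqref{eq:point_trans} established in Theorem \ref{th:th1}. The key observation is that the two first integrals $J_{1}$ and $J_{2}$ of the Li\'enard equation are obtained by direct substitution of $y=F(\xi,w)$ and $x=G(\xi,w)$ into the first integrals $I_{1}$ and $I_{2}$ of the anharmonic oscillator. Since both $I_{1}$ (rational in its arguments) and $I_{2}$ (Liouvillian, as shown in \ref{sec:appendix}) are themselves Liouvillian functions of $x$, $y$, and $y_{x}$, the composite functions $J_{1}$ and $J_{2}$ will be Liouvillian in $\xi$, $w$, and $w_{\xi}$ precisely when the substituted expressions $F$, $G$, and the derivatives $F_{\xi}, F_{w}, G_{\xi}, G_{w}$ are Liouvillian functions of their arguments. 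Because the class of Liouvillian functions is closed under composition, the arithmetic operations, differentiation, and taking primitives, this closure is the engine that drives the whole argument.

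First I would recall that the Liouvillian class is closed under composition and the field operations, so that if $F$ and $G$ are Liouvillian then so are their partial derivatives (differentiation preserves Liouvillianity) and therefore so is the expression $(F_{\xi}+F_{w}w_{\xi})/(G_{\xi}+G_{w}w_{\xi})$ appearing in \eqref{eq:fi_r} and \eqref{eq:fi_t}. Substituting a Liouvillian argument into the hypergeometric function ${}_{2}F_{1}$ in \eqref{eq:fi_t} again yields a Liouvillian function, since \ref{sec:appendix} establishes that this particular ${}_{2}F_{1}$ combination is Liouvillian. Hence both $J_{1}$ and $J_{2}$ are Liouvillian functions of $(\xi, w, w_{\xi})$.

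Next I would pass from the two non-autonomous integrals to a single autonomous one. As noted immediately before the statement, one solves $J_{1}=\mathrm{const}$ for the explicit variable $\xi$ and substitutes the result into $J_{2}$; the elimination of $\xi$ produces an autonomous first integral of the Li\'enard equation. The point to verify here is that this elimination stays within the Liouvillian class: solving $J_{1}$ for $\xi$ requires inverting a Liouvillian function, and the resulting autonomous integral is then a composition of Liouvillian functions, hence Liouvillian. This furnishes the Liouvillian integrability claim when $F$ and $G$ are Liouvillian. For the second assertion, when $F$ or $G$ fails to be Liouvillian, the same substitution still produces a genuine autonomous first integral (the construction in the corollary is purely algebraic and does not depend on the functional class of $F,G$), but it is now built from the transcendental hypergeometric expression together with non-Liouvillian $F,G$, so it is a transcendental first integral rather than a Liouvillian one.

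The main obstacle I anticipate is the step of solving $J_{1}$ for $\xi$ and confirming that the inversion and subsequent substitution do not take us outside the Liouvillian class. Inverting a Liouvillian function need not in general yield a Liouvillian function, so one must argue more carefully using the specific $\xi$-dependence of the transformations. Inspecting \eqref{eq:tr_1}, \eqref{eq:tr_2}, and \eqref{eq:tr_4}, the dependence of $F$ and $G$ on $\xi$ is through elementary exponentials ${\rm e}^{c\xi}$, which are invertible within the elementary (hence Liouvillian) functions. Thus in each case the relation $J_{1}=\mathrm{const}$ can be solved for $\xi$ explicitly in terms of a logarithm of a Liouvillian expression, keeping everything Liouvillian; this is the crux that must be checked case by case, and it is what makes the theorem go through.
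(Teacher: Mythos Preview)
Your proposal is correct and follows essentially the same route as the paper: pull back $I_{1}$ and $I_{2}$ through the point transformation to obtain $J_{1}$ and $J_{2}$, invoke Appendix~A for the Liouvillianity of the hypergeometric expression, and eliminate $\xi$ between $J_{1}$ and $J_{2}$ to produce an autonomous first integral. You are in fact more careful than the paper on one point: the paper simply asserts that solving $J_{1}$ for $\xi$ and substituting into $J_{2}$ yields an autonomous Liouvillian integral, whereas you flag the genuine issue that inverting a Liouvillian relation need not stay Liouvillian and resolve it by appealing to the explicit exponential $\xi$-dependence of the transformations \eqref{eq:tr_1}--\eqref{eq:tr_4}.
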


Since \eqref{eq:anho} possesses a two-dimensional Lie algebra of point symmetries at $n\neq-3$ and a three-dimensional algebra at $n=-3$, another corollary of Theorem \ref{th:th1} holds.
\begin{corollary}
Any equation from \eqref{eq:Lienard} that is equivalent to \eqref{eq:anho} at $n\neq0,1$ possesses either a two-dimensional or a three-dimensional Lie algebra of point symmetries.
\end{corollary}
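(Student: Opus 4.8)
The plan is to exploit the fact that the dimension of the Lie algebra of point symmetries is invariant under point transformations, together with the already-recorded symmetry structure of the anharmonic oscillator \eqref{eq:anho}.

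First I would recall the classical fact from the theory of Lie point symmetries that an invertible point transformation \eqref{eq:point_trans} identifies the Lie algebras of point symmetries of the two equations it relates. Since \eqref{eq:point_trans} is a diffeomorphism between the relevant planes, it maps vector fields to vector fields and preserves the Lie bracket; because prolongation commutes with point transformations, a vector field generates a point symmetry of \eqref{eq:anho} if and only if its image generates a point symmetry of the equivalent Li\'enard equation. Hence the two symmetry algebras are isomorphic and, in particular, have equal dimension, so this dimension is constant along the equivalence class defined by \eqref{eq:point_trans}.

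Next I would invoke the symmetry count for \eqref{eq:anho} stated in the Introduction: for $n\neq-3,0,1$ equation \eqref{eq:anho} admits a two-dimensional Lie algebra of point symmetries, whereas for $n=-3$ this algebra is three-dimensional. By Theorem \ref{th:th1}, every equation from \eqref{eq:Lienard} in the equivalence class of \eqref{eq:anho} is related to \eqref{eq:anho} through a transformation of the form \eqref{eq:point_trans}. Combining this with the invariance from the previous step, each such Li\'enard equation inherits a symmetry algebra of the same dimension as its representative \eqref{eq:anho}: three-dimensional when that representative has $n=-3$ and two-dimensional in all other admissible cases. This is precisely the asserted dichotomy.

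Finally, the value $n=-1$, which is allowed in the corollary but excluded from Theorem \ref{th:th1}, must be handled separately; here I would appeal to the dedicated treatment in \ref{sec:appendix2}, where the symmetry structure of \eqref{eq:anho} at $n=-1$ is obtained directly, and transfer the resulting dimension to the equivalent equations by the same identification of symmetry algebras. The only substantive input is the symmetry computation for \eqref{eq:anho}; the invariance of the symmetry algebra under \eqref{eq:point_trans} is a purely structural, classical fact, so I expect the main point requiring care to be merely the bookkeeping of the two exceptional values $n=-3$ and $n=-1$ rather than any genuine analytic obstacle.
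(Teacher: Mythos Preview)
Your core argument is correct and is exactly what the paper does: the justification in the paper is simply the sentence preceding the corollary, namely that \eqref{eq:anho} has a two-dimensional symmetry algebra for $n\neq-3$ and a three-dimensional one for $n=-3$, combined with the invariance of the symmetry algebra under point transformations.

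The extra paragraph on $n=-1$ is, however, misplaced. First, \ref{sec:appendix2} does not compute the Lie symmetry algebra of \eqref{eq:anho} at $n=-1$; it only records first integrals and the associated metric, so the appeal you make is empty. Second, at $n=-1$ equation \eqref{eq:anho} reads $y_{xx}=0$ (the factor $n+1$ kills the nonlinearity), which is linear and carries an eight-dimensional point-symmetry algebra; the dichotomy ``two- or three-dimensional'' would then literally fail. The apparent mismatch between $n\neq0,1$ in the corollary and $n\neq0,\pm1$ in Theorem~\ref{th:th1} is a harmless slip in the paper: the statement is announced as a corollary of Theorem~\ref{th:th1}, so it should be read with $n\neq0,\pm1$, and no separate treatment of $n=-1$ is required.
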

Therefore, we see that any equation from \eqref{eq:criteria}, \eqref{eq:criteria_fp} and \eqref{eq:criteria_fp_n=-3} can be integrated via the classical Lie approach (see, e.g. \cite{Bluman}).

Now we consider several examples of Li\'enard equations that are equivalent to \eqref{eq:anho}.

\textbf{Example 1.} We  begin with a particular case of the generalized Duffing oscillator that is given by \eqref{eq:criteria_fp}:
\begin{equation}\label{eq:Duffing}
  w_{\xi\xi}+\alpha w_{\xi}+\frac{2(n+1)\alpha^{2}}{(n+3)^{2}}w+\delta w^{n}=0, \quad n\neq-3,-1,0,1.
\end{equation}
This family of equations was considered in \cite{Stachowiak2019,Demina2021}. The autonomous first integral in terms of the hypergeometric function for \eqref{eq:Duffing} was found in \cite{Stachowiak2019}, while in \cite{Demina2021} for $n\geq2$, $n\in\mathbb{N}$ it was shown that the Duffing oscillator is Liouvillian integrable if and only if it is given by \eqref{eq:Duffing}. However, it has not been demonstrated previously that integrability of \eqref{eq:Duffing} can be established from the equivalence to \eqref{eq:anho} and the existence of the corresponding two-dimensional Lie algebra of point symmetries.

Let us also remark that in \cite{Demina2021} the following equation was considered separately
\begin{equation}\label{eq:Duffing_n=2}
  w_{\xi\xi}+\alpha w_{\xi}-\frac{6\alpha^{2}}{25}w+\delta w^{2}=0.
\end{equation}
However, it can be transformed into \eqref{eq:Duffing} at $n=2$ by a simple shift $w\rightarrow w+6\alpha^{2}/(25\delta)$. Finally, we remark that the generalized Duffing oscillator is used in biology, mechanics, technics and other fields of science (see \cite{Stachowiak2019,Demina2021} and references therein.)

\textbf{Example 2.}
Let us suppose that
\begin{equation}
\label{eq:ex3_1}
  M=1+\frac{3\mu(m+1)}{2(m+2)w^{m}}, \quad m\neq-1,-2,-\frac{1}{3}
\end{equation}
and $\delta=-1$. Then from \eqref{eq:criteria} we get that the family of nonlinear oscillators
\begin{equation}\label{eq:ex3}
  w_{\xi\xi}+(w^{m}+\mu)w_{\xi}+\frac{2}{9(m+1)}w^{2m+1}+\frac{\mu}{m+2}w^{m+1}+\frac{(m+1)\mu^{2}}{(m+2)^{2}}w=0,
\end{equation}
is equivalent to \eqref{eq:anho} with $n=(1-m)/(3m+1)$ and $\delta=-1$.

Transformations \eqref{eq:point_trans} that map \eqref{eq:ex3} into \eqref{eq:anho} are given by
\begin{equation}\label{eq:ex3_3}
  F=\left(\frac{3 \sqrt{2}\mu m(m+1)}{(m+2)(3m+1) w^{m}}\right)^{\frac{3m+1}{2m}}{\rm e}^{-\frac{\mu(3m+1)\xi}{2m+4}}, \quad
  G={\rm e}^{-\frac{\mu m \xi}{m+2}}\left(\frac{3\mu(m+1)}{2(m+2)w^{m}}+1\right).
\end{equation}
Consequently, from Theorem \ref{th:th2} we obtain that \eqref{eq:ex3} is Liouvillian integrable. If one substitutes \eqref{eq:ex3_3} into \eqref{eq:fi_r} and \eqref{eq:fi_t}, one gets explicit expressions for the first integrals of \eqref{eq:ex3}.

On one hand, equation \eqref{eq:ex3} can be considered as a traveling wave reduction of the diffusion equation $u_{t}= u_{\zeta\zeta}+u^{m}u_{\zeta}+b_{1}u^{2m+1}+b_{2}u^{m+1}+b_{3}u$, where $w=u(\zeta-\mu t)$, $b_{1}=2/(9(m+1))$, $b_{2}=\mu/(m+1)$ and $b_{3}=(m+1)\mu^{2}/(m+2)^{2}$. This partial differential equation is used in physics for the description of diffusion and filtration problems \cite{Polyanin2011}. On the other hand, at $m=1$ \eqref{eq:ex3} is a cubic oscillator with linear damping, that has application in heat propagation problems \cite{Ervin1984}, and at $m=2$ \eqref{eq:ex3} is the generalized Duffing--Van der Pol oscillator, which is used in mechanics and physics \cite{Delignieres,Uzunov2014}.

\section{Conclusion and discussion}
In this work we have considered geometric and analytical properties of the anharmonic oscillator. We have demonstrated that \eqref{eq:anho} provides an example of a superintegrable two-dimensional Riemannian metric with a linear and a transcendental first integrals and a two-dimensional metric with a polynomial integral of an arbitrary even degree. In addition, we have found geodesics of metric \eqref{eq:metric_aho} in the explicit form. We have generalized this construction for an arbitrary nonlinear oscillator that is cubic with respect to the first derivative. We have explicitly described all such oscillators that can lead to a superintegrable metric with a linear first integral. Furthermore, we have constructed the family of Li\'enard equations that is equivalent to \eqref{eq:anho} with respect to the point transformations and demonstrated that there are examples of important nonlinear oscillators among this family.  In addition, we remark that the results of this work highlight a connection between integrable two-dimensional Riemannian metrics and nonlinear autonomous oscillators, wherein the latter can have applications in physics, biology and other fields of science. This also demonstrates a way of how the results on the integrability of two-dimensional metrics can be used in studying integrability of nonlinear oscillators and vice versa.

To conclude, our results show that different notions of integrability can be connected even through quite a simple dynamical system like \eqref{eq:anho}. While integrability of \eqref{eq:anho} alone is evident, both superintegrability of \eqref{eq:metric_aho} and integrability of the Li\'enard equations given by \eqref{eq:criteria} or \eqref{eq:criteria_fp} are not so obvious. Moreover, by connecting these Li\'enard equations with \eqref{eq:anho} we automatically demonstrate that in the generic case they admit a two-dimensional Lie algebra of point symmetries. Finally, let us remark that by excluding $y_{x}$ from \eqref{eq:anho_I_1} and \eqref{eq:anho_I_2} one can obtain the general solution of \eqref{eq:anho} in the explicit form (notice that we present $x$ as a function of $y$ in this way). With the help of transformations \eqref{eq:tr_1}, \eqref{eq:tr_2} and \eqref{eq:tr_4} we can map this general solution to the general solution of the corresponding Li\'enard equation. Therefore, it is easy to construct general solutions of the Duffing oscillators \eqref{eq:Duffing} or the generalised Duffing--Van der Pol oscillators \eqref{eq:ex3}, which, to the best of our knowledge, has not been done previously.

\section{Acknowledgement}
J.G. is partially supported by the Agencia Estatal de Investigac\'ion grant PID2020-113758GB-I00 and AGAUR grant number 2021SGR 01618. D.S. is partially supported by RSF grant 19-71-10003 (https://rscf.ru/en/project/19-71-10003/), Sections 2 and 3. Authors are grateful to anonymous referees and Editor for their valuable comments and suggestions.

\appendix
\section{Appendix A}
\label{sec:appendix}
Here we derive integral \eqref{eq:anho_I_2} for \eqref{eq:anho}. A non-autonomous first integral of \eqref{eq:anho} is a solution of the equation
\begin{equation}\label{eq:eq_a1}
  I_{x}+uI_{y}-\delta(n+1)y^{n}I_{u}=0,
\end{equation}
where $u=y_{x}$.
Suppose that $I=x+H(y,u)$. Then we get
\begin{equation}\label{eq:eq_a2}
  uH_{y}-\delta(n+1)y^{n}H_{u}=-1.
\end{equation}
In order to find a solution of \eqref{eq:eq_a2} we use the method of characteristics (see, e.g. \cite{Zhang,Arnold,Polyanin}). Characteristic equations of \eqref{eq:eq_a2} are
\begin{equation}\label{eq:eq_a2a}
  \frac{dy}{u}=\frac{du}{-\delta(n+1)y^{n}}=\frac{dH}{-1}.
\end{equation}
The general solution of \eqref{eq:eq_a2} is an arbitrary continuously differentiable function of two functionally independent first integrals of \eqref{eq:eq_a2a} \cite{Zhang,Arnold,Polyanin}. The first one can be obtained by integrating the equation that is formed by the first two terms of \eqref{eq:eq_a2a}
\begin{equation}\label{eq:eq_a3}
  u^{2}+2\delta y^{n+1}=c_{1}.
\end{equation}
Here $c_{1}$ is an integration constant. Finding the second first integral with the help of the first and last terms of \eqref{eq:eq_a2a} and taking into account \eqref{eq:eq_a3} we get that
\begin{equation}\label{eq:eq_a4}
  H=-\int \frac{dy}{\sqrt{ c_{1}-2\delta y^{n+1} }}.
\end{equation}
This quadrature can be computed as follows
\begin{equation}\label{eq:eq_a5}
\begin{gathered}
\int \frac{dy}{\sqrt{c_{1}-2\delta y^{n+1}}}=\frac{1}{\sqrt{c_{1}}(n+1)}\sqrt[n+1]{\frac{c_{1}}{2\delta}}\int \tau^{-\frac{n}{n+1}}(1-\tau)^{-\frac{1}{2}}d\tau=\\=
\frac{1}{\sqrt{c_{1}}(n+1)}\sqrt[n+1]{\frac{c_{1}}{2\delta}}\int\limits_{0}^{\tau} \zeta^{-\frac{n}{n+1}}(1-\zeta)^{-\frac{1}{2}}d\zeta=
\frac{\tau^{\frac{1}{n+1}}}{\sqrt{c_{1}}(n+1)}\sqrt[n+1]{\frac{c_{1}}{2\delta}}\int\limits_{0}^{1} v^{-\frac{n}{n+1}}(1-\tau v)^{-\frac{1}{2}}dv=\\
=\frac{1}{\sqrt{c_{1}}}\sqrt[n+1]{\frac{c_{1}}{2\delta}}\tau^{\frac{1}{n+1}}{}_{2}F_{1}\left(\frac{1}{2},\frac{1}{n+1};\frac{n+2}{n+1};\tau\right)=
\frac{y}{\sqrt{c_{1}}}{}_{2}F_{1}\left(\frac{1}{2},\frac{1}{n+1};\frac{n+2}{n+1};\frac{2\delta}{c_{1}}y^{n+1}\right).
\end{gathered}
\end{equation}
Here we use the Euler integral representation for the hypergeometric function \cite{Bateman,Abramowitz,Olver}
\begin{equation}\label{eq:eq_a5a}
  {}_{2}F_{1}(a,b;c;\tau)=\frac{\Gamma(c)}{\Gamma(b)\Gamma(c-b)}\int\limits_{0}^{1}v^{b-1}(1-v)^{c-b-1}(1-\tau v)^{-a}dv,
\end{equation}
and the following notations
\begin{equation}\label{eq:eq_a6}
  \tau=\frac{2\delta}{c_{1}}y^{n+1},\quad \zeta=\tau v.
\end{equation}
As a result, we get the expression for non-autonomous first integral of \eqref{eq:anho}
\begin{equation}\label{eq:eq_a7}
I_{2}^{'}=x-\frac{y}{\sqrt{y_{x}^{2}+2\delta y^{n+1}}}{}_{2}F_{1}\left(\frac{1}{2},\frac{1}{n+1};\frac{1}{n+1}+1; \frac{2\delta y^{n+1}}{y_{x}^{2}+2\delta y^{n+1}}\right).
\end{equation}
If one applies the linear transformation (see, e.g. \cite{Bateman,Abramowitz,Olver}) to \eqref{eq:eq_a7}, then one gets \eqref{eq:anho_I_2}.

One can see that the Euler representation for the hypergeometric function is obtained from the indefinite integral. Consequently, the hypergeometric function that is in \eqref{eq:eq_a7} or in \eqref{eq:anho_I_2} is a Liouvillian function.

\section{Appendix B}
\label{sec:appendix2}
In this appendix we consider \eqref{eq:anho_t} at $n=-1$, i.e.
\begin{equation}\label{eq:a2_1}
  y_{xx}+\delta y^{-1}=0.
\end{equation}
The autonomous firs integral of \eqref{eq:a2_1} is
\begin{equation}\label{eq:a2_2}
N_{1}=y_{x}^{2}+2\delta \ln y.
\end{equation}
In the same way as in \ref{sec:appendix} one can find the non-autonomous first integral of \eqref{eq:a2_1} as follows
\begin{equation}\label{eq:a2_3}
N_{2}=x+\sqrt{\frac{\pi}{2\delta}}y{\rm e}^{\frac{y_{x}^{2}}{2\delta}}\,\mbox{erf}\left\{\frac{y_{x}}{\sqrt{2\delta}}\right\}.
\end{equation}
Equation \eqref{eq:a2_1} is metrisable with the Riemannian metric
\begin{equation}\label{eq:a2_4}
  ds^{2}=\frac{1}{C_{1}^{2}(2\delta C_{1}\ln y+C_{2})^{2}}\left((2\delta C_{1}\ln y+C_{2})dx^{2}+C_{1}dy^{2}\right),
\end{equation}
where $C_{1}\neq0$ and $C_{2}$ are arbitrary constants.
The corresponding Hamiltonian system for geodesics is
\begin{equation}\label{eq:a2_5}
  H=(2\delta C_{1}\ln y+C_{2})C_{1}\left(C_{1} \frac{p_{1}^{2}}{2}+(2\delta C_{1}\ln y+C_{2})\frac{p_{2}^{2}}{2}\right).
\end{equation}
As in the case of \eqref{eq:anho}, integral $N_{1}$ lifted to \eqref{eq:a2_5} will be a combination of Hamiltonian \eqref{eq:a2_5} and the linear integral $L=p_{1}$. On the other hand, the integral $N_{2}$ will lead to an additional first integral of Hamiltonian \eqref{eq:a2_5}, which is functionally independent of $H$ and $L$.

\end{document}